\newtheorem{theorem}{Theorem}[section]
\newtheorem{lem}[theorem]{Lemma}
\newtheorem{prop}[theorem]{Proposition}
\newtheorem{cor}[theorem]{Corollary}
\newtheorem{defin}[theorem]{Definition}
\def\vp{\varphi}
\begin{document}

\title{On  logically-geometric types  of algebras }

\maketitle

\begin{center}

\author{G.Zhitomirski}
     
 \smallskip
        {\small
                Department of Mathematics,
          Bar-Ilan University,

          52900, Ramat Gan, Israel

                {\it E-mail address:}zhitomg@012.net.il
        }

\end{center}

\begin{abstract}
The connection between classical model theoretical types (MT-types) and logically-geometrical types (LG-types) introduced by B. Plotkin is considered. It is proved that MT-types of two $n$-tuples in two universal algebras  coincide  if and only if their  LG-types coincide. An algebra $H$  is called logically perfect if for every two $n$-tuples in $H$ whose types coincide, one can be sent to another by means of an automorphism of this algebra. Some sufficient condition for logically perfectness of  free finitely generated algebras is given which helps to prove that finitely generated free Abelian groups, finitely generated free nilpotent groups and finitely generated free semigroups are logically perfect. It is proved that if two Abelian groups have the same type and one of them is finitely generated and free then these groups are isomorphic.

\end{abstract}



\section{Introduction}\label{S_Int}
The ideas suggested and developed by  B. Plotkin in the field of algebraic logic  seem to be very interesting and efficient.  It turns  out that the geometrical notions and the  geometrical intuition  can be successfully applied  in studying  algebras from arbitrary varieties.  Such approach leads to so called universal algebraic geometry and multi-sorted logical geometry. 

The sketch of the ideas of universal algebraic geometry, problems and  results  can be found for example in \cite{Plotkin_AG}, \cite{Plotkin_7Lec}, \cite{Plotkin_SomeResultsUAG}, \cite{PlotZitom_1}, \cite{PlotZitom_2}. The notions  of logical geometry and obtained facts are presented in  \cite{PZ},\cite{Plotkin_IsotAlg}, \cite{Plotkin}.

The purpose of this paper is to consider only one but important notion of model theory, namely, the notion of type. The model theoretic notion of type is well known \cite{MT}. Such a type is denoted in the paper  by MT-type. MT-type is related to one-sorted logics. On the other hand, the ideas of universal logical geometry  give rise to logically-geometric types (LG-types).  This notion is related to multi-sorted logic \cite{PZ},\cite{Plotkin}. Some of the problems discussed in the literature are the following ones: how are connected algebraically two $n$-tuples in an algebra whose types coincide and what we can say about two algebras whose types coincide. 

Let $\Theta $ be  a variety of universal algebras of some signature and $W(X)$ denote the  free $\Theta$-algebra  
over a set $X=\{x_1, x_2, \dots , x_n\}$. In the universal  algebraic geometry, the set  $A^n$ of all  $n$-tuples in a $\Theta $-algebra $A$ is replaced by the set $\hom (W(X), A)$ which is called an $n$-dimensional affine space and whose elements are called points. Since a point $\mu \in \hom (W(X), A)$ is a map we can speak  about its kernel. Along with this usual kernel, so called logical kernel  of $\mu$ is defined. The notion of logical kernel of a point leads to the notion of  LG-type of an algebra.  All notions mentioned above are defined in Section \ref{S_Prel}.

Although the two kinds of types mentioned above are related to different languages we show that MT-types of two $n$-tuples coincide if and only if the logical kernels of the corresponding points coincide (Theorem \ref{twoTypes}). 

Then in Section \ref{LPA} we consider so called logically perfect algebras. An algebra $H$ is said to be logically perfect if for
 every its  two $n$-tuples  whose types coincide there exists  an automorphism of $H$ which  sends one of these tuples to another.  A sufficient condition for logically perfectness of  free finitely generated algebras is given. The main result in this section is Theorem \ref{three log.perf}. 

The last Section \ref{Iso} is devoted to  algebras having the same type (isotyped algebras).   It is proved (Theorem \ref{IsoAbel}) that if two Abelian groups have the same type and one of them is finitely generated and free then these groups are isomorphic.

The obtained results solve some problems set in \cite{Plotkin}.

{\bf Acknowledgments}
The author is pleased to thank B. Plotkin for useful discussions and interesting suggestions.  

\section{Preliminaries}\label{S_Prel}
Throughout this paper, $\Theta $ is a variety of universal algebras of some signature which determines the corresponding first-order language $L$ with equality "$\equiv $" and the infinite set $X^0 =\{x_1, x_2, \dots \}$ of variables. Let $W(X)$ denote the free $\Theta $-algebra generated by $X\subset  X^0$. We consider finite subsets $X\subset  X^0$  only and  follow the conception suggested by B. Plotkin (see for example  \cite{Plotkin_AG}, \cite{Plotkin}, \cite{Plotkin_IsotAlg}, \cite{PZ}).

 Let $\mathbb M$ be a $\Theta $-algebra with the domain $M$. Every  $n$-tuple $\bar{a}=(a_1,\dots ,a_n)$ of elements of $M$ determines a homomorphism $\mu : W(X)\to \mathbb M$ where $X=\{x_1, \dots , x_n \}$, viz $\mu (x_i)=a_i$ for $i=1,\dots, n$. And vice versa, every such homomorphism determines an $n$-tuple in $M$. Thus the set $M^n$ can be identified with  $ \hom (W(X),\mathbb M )$ which is called an affine space and whose elements are called points. Considering the tuples in $M$ as  points in the corresponding  affine space gives us new interesting opportunities. 

First of all, the kernel of a point $\mu$ appears: $Ker \mu =\{(w,w')\vert \mu (w)=\mu (w')\}$.  It is useful to consider equalities $w\equiv w'$ instead of corresponding pairs in $W(X)$. Such an approach leads to connections  between sets of points and systems of identities, that is, to something like to algebraic geometry for an universal algebra. For details see papers cited above. In the present paper, we focus  on the notion of so called logical kernel of a point $\mu$: $LKer \mu$. We recall the definition according to \cite{Plotkin}. 

 Let $\Gamma$ denote the set of all finite subsets of $X^0$. For every $X\in \Gamma$, consider the signature $L_X= \{\vee, \wedge, \neg, \exists x, x \in X, M_X, \} $, where  $M_X$ is the  set of all equalities $w\equiv w'$, where $w,w'\in W(X)$. By adding for every  $X\in \Gamma$ symbols  $s=s^{XY}:W(X)\to W(Y)$, we obtain multi-sorted signature $L_\Theta$ . 
The corresponding multi-sorted language is defined by induction on length and sort of formulas. 

\begin{defin}\label{formula}
\item 1. Each equality $w\equiv w'$ is a formula of the length zero and sort  $X$ if $w\equiv w'\in M_{X}$.
\item 2. Let $u$ be a formula of the length $n$ and the sort $X$. Then the formulas $\neg u$ and $\exists x u$ are the formulas of the same sort $X$ and  the length $(n+1)$. 
\item 3. For the given $s:W(X)\to W(Y)$ we have the formula $s_{*}u$ with the length $(n+1)$ and the sort $Y$.
\item 4. Let  $u_1$ and $u_2$ be formulas of the same sort $X$ and the length $n_1$ and $n_2$ accordingly. Then the formulas $(u_1\vee u_2)$ and $(u_1\wedge u_2)$ have the length $(n_1+n_2+1)$ and the sort $X$.  

 The set of all formulas of the sort $X$ will be denote by $\Phi (X)$.
\end{defin}
The value $Val^{X}_{H}(u)$ of a formula  $u \in \Phi (X)$ in a $\Theta $-algebra $H$ is defined according to the construction. Elements of $Val^{X}_{H}(u)$ are points $\mu : W(X) \to H$.

\begin{defin}\label{Val}

\item  (1).  $Val^{X}_{H}(w\equiv w')=\{ \mu \mid \mu (w)=\mu(w') \}$.

\item (2). If $v=\exists x u$ and  $u\in \Phi (X)$, then $\mu \in Val^{X}_{H}(v)$ if and only if there exists a point $\nu :W(X)\to H$ such that $\nu$ coincides with $\mu$ for all $y\in X$ besides $x$ and $\nu \in Val^{X}_{H}(u)$. 

\item (3). If $u_1, u_2 \in \Phi (X)$  then $Val^{X}_{H}(u_1\vee u_2)=Val^{X}_{H}(u_1) \cup Val^{X}_{H}(u_2)$, $Val^{X}_{H}(u_1\wedge u_2)=Val^{X}_{H}(u_1) \cap Val^{X}_{H}(u_2)$.

\item (4). $Val^{X}_{H}(\neg u)=\hom (W(X),H )\setminus Val^{X}_{H}(u)$.

\item (5). Let $s:W(X)\to W(Y)$ be a homomorphism,  $v\in \Phi (X)$ and $u=s_{*}v$. Then $\mu \in Val^{Y}_{H} (u)$ if and only if $\mu \circ s \in Val^{X}_{H} (v)$.

\end{defin}
\begin{defin}\label{LKer}
A formula $u\in \Phi(X)$ belongs to the logical kernel $LKer(\mu)$
of a point $\mu: W(X)\to H$ if and only if $\mu\in Val^{X}_{H}(u)$.

The set $LKer(\mu)$ of formulas from $\Phi(X)$ is  called {\it logically-geometric $X$-type} of the point $\mu$ ($X$-$LG$-type). 

\end{defin}
\begin{defin}[\cite{PZ}]
The set $T$ of formulas from $\Phi(X)$ is called  $X$-$LG$-type of the algebra $H$, if there is a point $\mu : W(X)\to H$
such that $T=LKer(\mu)$. 

Algebras $H_1$ and $H_2$ in $\Theta$ are called $LG$-isotyped, if for any
finite $X$, every $X$-type of the algebra $H_1$ is an $X$-type of the
algebra $H_2$ and vice versa.
\end{defin}

\begin{defin}
An algebra $H$ is called logically perfect if for every two points $\mu$ and $\nu$  in $H$ having the same $X$-$LG$-type (that is, $LKer(\mu)=LKer(\nu)$) there exists an automorphism $\varphi$ of $H$ such that $\mu =\varphi \circ \nu$, that is, $\varphi$ transports $n$-tuple $(\nu (x_1), \dots ,\nu(x_n))$ to $n$-tuple $(\mu (x_1), \dots ,\mu(x_n))$.
\end{defin}

Now we recall the model-theoretical notion of  type of an $n$-tuple  $\bar{a}$. 
\begin{defin}\label{MT_Type}
The type $tp^{\mathbb M}(\bar{a})$ consists of all formulas $u(x_1,\dots ,x_n) \in L$ with free variables  $x_1,\dots ,x_n$ (all other variables in this formula are bounded) such that $\mathbb M \models u(a_1, \dots ,a_n)$, that is, $u(x_1,\dots ,x_n)$ is true under interpretation which assigns $a_i$ to $x_i$.
\end{defin}
Such a type will be called $MT$-type.  It is worth to mention that we do not consider types depending of parameters (the more general definition can be found in \cite{chloe}). The problem arises  how two tuples are algebraically connected  if their $MT$-types coincide.  

Two kinds of types defined above (MT- and LG-type) are  sets of formulas in different languages. We will prove below  that the two points  $\mu$ and  $\nu$ have the same $X$-$LG$-type if and only if the  $n$-tuples  $(\nu (x_1), \dots ,\nu(x_n))$ and  $(\mu (x_1), \dots ,\mu(x_n))$ have the same $MT$-type. 


\section{Relations between  logical-geometrical types and model-theoretical types}\label{S_Types}

\begin{theorem}\label{twoTypes}
Let $H_1$ and $H_2$ be $\Theta $-algebras. Let  $\bar{a}=(a_1,\dots ,a_n)$ and $\bar{b}=(b_1,\dots ,b_n)$ be $n$-tuples in $H_1$ and $H_2$ respectively. Consider two corresponding points $\nu :W(X)\to H_1$ and  $\mu :W(X)\to H_2$ where $X=\{x_1, \dots , x_n \}$, $\nu(x_i)=a_i$ and $\mu(x_i)=b_i$, $i=1,\dots ,n$. Then $LKer(\nu)=LKer(\mu)$ if and only if $tp^{H_1}(\bar{a})=tp^{H_2}(\bar{b})$.
\end{theorem}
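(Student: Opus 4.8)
The plan is to set up a translation between the two languages and show it preserves truth/membership in both directions. The key observation is that a multi-sorted formula $u \in \Phi(X)$ is built from equalities $w \equiv w'$ in $W(X)$ using $\vee, \wedge, \neg$, the quantifiers $\exists x$ with $x \in X$, and the transition operators $s_*$ for homomorphisms $s\colon W(X') \to W(X)$ (note that the sort only grows or stays the same along the construction, and $s_*v$ has sort $X$ when we pull back from sort $X'$). I would first treat the fragment without any $s_*$: such a formula of sort $X$ is essentially a first-order formula over the signature of $\Theta$ whose free (and bound) variables lie in $X = \{x_1,\dots,x_n\}$, because an equality $w \equiv w'$ with $w,w' \in W(X)$ is literally an atomic formula $t(x_1,\dots,x_n) \equiv t'(x_1,\dots,x_n)$ in $L$, and $\exists x$ with $x \in X$ is an ordinary quantifier. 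For this fragment, Definition \ref{Val} clauses (1)--(4) match exactly the Tarskian semantics of Definition \ref{MT_Type}, so by induction on the length of $u$ one gets $\mu \in Val^X_H(u)$ iff $H \models u(\mu(x_1),\dots,\mu(x_n))$. Hence for $s_*$-free formulas, $LKer(\nu) = LKer(\mu)$ restricted to this fragment is equivalent to $tp^{H_1}(\bar a) = tp^{H_2}(\bar b)$ restricted to formulas with free variables among $x_1,\dots,x_n$ — which, after renaming bound variables, is the whole of $tp$.

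Next I would deal with the transition operators $s_*$. The point is that $s_*$ does not add genuine expressive power relative to MT-types: if $s\colon W(Y) \to W(X)$ is a homomorphism, it is determined by $s(y_j) = t_j(x_1,\dots,x_n) \in W(X)$, and the clause $\mu \in Val^X_H(s_*v)$ iff $\mu \circ s \in Val^Y_H(v)$ says that we evaluate $v$ at the tuple $(t_1(\bar a),\dots)$ instead of $\bar a$. So by a second induction — this time on the length of the formula including $s_*$ steps — I would show that every $u \in \Phi(X)$ is equivalent (as a subset of $\hom(W(X),H)$, uniformly in $H$) to an $s_*$-free formula $\hat u$ of sort $X$, obtained by pushing all the $s_*$'s inward to the atomic level and then composing the substitutions; the base case is that $s_*(w \equiv w')$ equals $(s(w) \equiv s(w'))$, and $s_*$ commutes with $\vee,\wedge,\neg$ and interacts with $\exists$ by a bound-variable-renaming argument (possibly introducing fresh variables from $X^0$, which is harmless since we may take $X$ large enough, or argue with auxiliary sorts). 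Composing this with the first step gives $\mu \in Val^X_H(u)$ iff $H \models \hat u(b_1,\dots,b_n)$, and symmetrically for $\nu$; hence $LKer(\nu) = LKer(\mu)$ iff $\bar a$ and $\bar b$ satisfy exactly the same $s_*$-free formulas of sort $X$, iff $tp^{H_1}(\bar a) = tp^{H_2}(\bar b)$.

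For the converse direction (if the MT-types agree then the LG-types agree) nothing more is needed: every $s_*$-free formula of sort $X$ was already shown to be an ordinary $L$-formula with free variables among $x_1,\dots,x_n$, so agreement of MT-types forces agreement on exactly these formulas, hence on all of $\Phi(X)$ via the reduction $u \mapsto \hat u$. The main obstacle I expect is the careful handling of the interaction between $s_*$ and $\exists x$ in the inward-pushing induction: one must manage variable capture when $s$ is not injective on variables or when the quantified variable $x$ appears in the substitution terms, and one must be precise that ``equivalent'' means equality of value-sets for every $\Theta$-algebra $H$ simultaneously (a syntactic, not merely semantic-over-one-model, equivalence), so that the statement about $LKer$ — which quantifies over the specific $H_1, H_2$ — goes through. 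Once that bookkeeping is in place, the theorem follows from the two inductions.
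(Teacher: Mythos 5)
There is a genuine gap, and it lies in the direction $LKer(\nu)=LKer(\mu)\Rightarrow tp^{H_1}(\bar a)=tp^{H_2}(\bar b)$. Your argument for that direction rests on the claim that every first-order formula with free variables among $x_1,\dots ,x_n$ becomes, ``after renaming bound variables'', an $s_*$-free formula of sort $X$. But by Definition \ref{formula} a formula of sort $X$ may quantify only over variables belonging to $X$, so the $s_*$-free sort-$X$ fragment is the $n$-variable fragment of first-order logic, which is strictly weaker than the full language. Concretely, take $\Theta$ to be Abelian groups, $n=1$, $X=\{x_1\}$: the MT-formula $\exists y\,(y+y\equiv x_1)$ cannot be rewritten using the single variable $x_1$, since by clause (2) of Definition \ref{Val} the value of $\exists x_1 u$ does not depend on the point at all when $X=\{x_1\}$; hence every $s_*$-free sort-$\{x_1\}$ formula is equivalent over $\mathbb{Z}$ to a Boolean combination of conditions $kx_1\equiv mx_1$ and point-independent conditions, and none of these defines the even numbers. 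For the same reason your intermediate claim that every $u\in\Phi(X)$ is equivalent, uniformly in $H$, to an $s_*$-free formula of sort $X$ is false (apply $s_*$ with $s(x_1)=s(y)=x_1$ to $\exists y\,(y+y\equiv x_1)$), and the escape hatches you mention are not available: $X$ is fixed by the statement of the theorem, because $LKer(\nu)\subseteq\Phi(X)$. The guiding premise that ``$s_*$ does not add genuine expressive power'' is exactly wrong at a fixed sort: $s_*$ is the mechanism that imports extra quantified variables into sort $X$, and the paper's proof of this direction (Lemma \ref{1}) uses it for precisely that purpose — an MT-formula whose bound variables form a set $Y$ disjoint from $X$ is first read as a formula of sort $X\cup Y$ and then pulled back along any homomorphism $s\colon W(X\cup Y)\to W(X)$ fixing the $x_i$, producing a formula of $\Phi(X)$ whose membership in $LKer(\nu)$ records the truth of the original formula at $\bar a$.

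The other half of your plan is sound and is essentially the paper's Lemma \ref{AssociatedForm}: pushing $s_*$ to the atoms amounts to substituting the terms $s(y_j)$ for the free occurrences of the $y_j$, and, provided the bound variables are drawn from a disjoint copy of the variable set (the paper's $\tilde X^0$), one obtains for every $u\in\Phi(X)$ an ordinary one-sorted formula $\tilde u$ with free variables among $x_1,\dots ,x_n$ such that $u\in LKer(\mu)\Leftrightarrow\tilde u\in tp^{H}(\bar b)$; this gives $tp$-agreement $\Rightarrow$ $LKer$-agreement, as you intend. So keep that translation, but state its target correctly (a first-order formula with fresh bound variables outside $X$, not a sort-$X$ formula), and replace the bound-variable-renaming argument for the remaining direction by the $s_*$-pullback argument sketched above.
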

\begin{proof}
We will prove this statement by several steps.
\begin{lem}\label{1}
 $LKer(\nu)=LKer(\mu) \Rightarrow tp^{H_1}(\bar{a})=tp^{H_2}(\bar{b})$.
\end{lem}
\begin{proof} 

Let $LKer(\nu)$=$LKer(\mu)$. Let $u\in tp^{H_1}(\bar{a})$. Under Definition \ref{MT_Type}, we have that $u= u(x_1,\dots ,x_n,\; y_1,\dots ,y_m)$ with listed variables, where $x_1, \dots , x_n$ have free occurrences only, all $y_1,\dots ,y_m$ are bounded, and $H_1 \models u(a_1, \dots ,a_n)$. 

On the other hand, according to  Definition \ref{formula}, $u\in  \Phi (X\cup Y)$, where $Y=\{y_1,\dots ,y_m \}$.  Therefore for every homomorphism  $\gamma :W(X\cup Y) \to H_1$ such that $\gamma (x_i)=a_i$,$i=1,\dots ,n$ (values of $\gamma (y_j)$ do not  influence), we have $\gamma \in Val^{X\cup Y}_{H_1}(u)$ (see definitions (1)-(4) from \ref{Val}). Consider arbitrary homomorphism $s:W(X\cup Y) \to W(X)$ such that $s(x_i)=x_i$, $i=1,\dots ,n$ and construct the formula $v=s_{*}u \in \Phi (X)$. Since $\nu \circ s (x_i)=a_i$, $i=1,\dots ,n$, we have  $ \nu \circ s \in Val^{X\cup Y}_{H_1}(u)$. Under definition \ref{Val} (5), we obtain that  $\nu \in Val^{X}_{H_1}(v)$ and therefore $v\in LKer \nu$.

Since $LKer(\nu)$=$LKer(\mu)$,  we have $v\in LKer \mu$, that is, $\mu \in Val^{X}_{H_2}(v)$ which implies that $ \mu \circ s \in Val^{X\cup Y}_{H_2}(u)$. Let $\delta :W(X\cup Y) \to H_2$ be an an arbitrary homomorphism such that $\delta (x_i)=b_i$ for all $i=1,\dots ,n$. Since $ \mu \circ s: W(X\cup Y) \to H_2$, $\mu \circ s (x_i)=b_i$ for all $i=1,\dots ,n$ and the variables from $Y$ are bounded in $u$ , we obtain that the values of the formula $u$ under interpretations $\delta $ and $\mu \circ s$ coincide.  
Therefore $H_2 \models u(b_1, \dots ,b_n)$, that is, $u\in tp^{H_2}(\bar{b})$. Consequently  $tp^{H_1}(\bar{a}) \subseteq tp^{H_2}(\bar{b})$. The inverse inclusion is also true by symmetry.  
\end{proof}

Now we assign to every formula $u\in \Phi (X),\; X\in \Gamma,$  a  formula $\tilde{u}$ in the one-sorted first order language, that is, a formula which does not contain symbols $s_*$. Let $\tilde{X}^0$  be a copy of $X^0$ such that  to every variable $x\in X^0$  the variable $\tilde{x}\in \tilde{X}^0$ corresponds one to one. Consider the first-order language $L$ associated with the variety $\Theta$ with set $X^0 \cup \tilde{X}^0$ of variables using variables from $X^0$ for free variables and variables from $\tilde{X}^0$ for bounded ones only.

We  construct the formula $\tilde{u}$ for every formula  $u\in \Phi (X), \; X\in \Gamma$,  inductively.

1. If $u$ is $w\equiv w'$ then $\tilde{u}=u$.

2. If $u$ is $\neg v, (u_1\vee u_2)$ or $(u_1\wedge u_2)$ then $\tilde{u}=\neg \tilde{v}, \; (\tilde{u_1}\vee \tilde{u_2})$ or $(\tilde{u_1}\wedge \tilde{u_2})$ respectively.

3. If $u=\exists x v$ and $x\in X$ then $\tilde{u}=\exists \tilde{x} \tilde{v}|^x _{\tilde{x}}$, where $\tilde{v}|^x _{\tilde{x}}$ denotes the formula in $L$ which is obtained by replacing of all occurrences of the variable $x$ in $\tilde{v}$  by  $\tilde{x}$.

4.  Let $Y=\{y_1,\dots ,y_m\}\in \Gamma$ and $s:W(Y)\to W(X)$ be a homomorphism,  $v\in \Phi (Y)$ and $u=s_{*}v$. Then $\tilde{u}=\tilde{v}|^{y_1} _{s(y_1)},\dots ,^{y_m} _{s(y_m)}$. Notice that all occurrences of elements from $X$ and $Y$ in $\tilde{v}$ can be free only.
\begin{lem}
 For every point $\mu :W(X) \to H$ and every $u\in \Phi (X)$
$$u\in LKer(\mu) \Leftrightarrow \tilde{u}\in tp^{H}(\bar{a}) ,$$
where $ \bar{a} =(\mu (x_1), \dots ,\mu(x_n))$, $X=\{x_1,\dots ,x_n\}$.
\end{lem}\label{AssociatedForm}
\begin{proof}
We will prove this statement by induction according to the construction of formulas of sort $X$.

1. Let $u$ be $w\equiv w'$. Under definition, $u\in LKer(\mu)$ means that $\mu (w)=\mu (w')$. In the considered case,  $\tilde{u} =u$ and we obtain that $u\in LKer(\mu)$ is equal to   $H \models \tilde{u}(a_1, \dots ,a_n)$, that is, to  $\tilde{u} \in tp^{H}(\bar{a})$.

2. For  $u=\neg v, (u_1\vee u_2$) or $(u_1\wedge u_2)$ our statement is obviously true.

3. Let $u=\exists x v$ , where  $x\in X$.  Assume that our statement is true  for $v$. The fact $u\in LKer(\mu)$ means that there exists a point $\nu :W(X)\to H$ which coincides with $\mu$ for all $y\in X$ besides $x$ and such that $\nu \in Val^{X}_{H}(v)$. Under assumption, $\nu \in Val^{X}_{H}(v)$ is equal   to $\tilde{v} \in tp^{H}(\bar{b})$ where $\bar{b}=(\nu (x_1),\dots ,\nu (x_n))$. Since   $\tilde{u}=\exists \tilde{x} \tilde{v}|^x _{\tilde{x}}$,  we obtain that $u\in LKer(\mu)$ is equal to $\tilde{u} \in tp^{H}(\bar{a})$ where $\bar{a}=(\mu (x_1),\dots ,\mu (x_n))$. Notice that $\tilde{u}$ does not contain $x$.

4. Let $Y=\{y_1,\dots ,y_m\}$, $s:W(Y)\to W(X)$ be a homomorphism,   $v\in \Phi (Y)$,  and $u=s_{*}v$.  Assume that our statement is true for $v$. This means that $v\in LKer (\mu \circ s)$ is equal to $\tilde{v} \in tp^{H}(\bar{b})$, where  $\bar{b}=\mu \circ s(\bar{y})$.  Further, $v\in LKer (\mu \circ s)$ is equal to $u\in LKer (\mu) $ and $\tilde{v} \in tp^{H}(\bar{b})$ is equal to $\tilde{u} \in tp^{H}(\bar{a})$ because $\tilde{u}=\tilde{v}|^{y_1} _{s(y_1)},\dots ,^{y_m} _{s(y_m)}$ according to the definition, and hence $H\models \tilde{u}(a_1,\dots ,a_n)$ is the same that $H\models \tilde{v}(b_1,\dots ,b_m)$. Thus our statement is true for $u$ too.

\end{proof}
\begin{lem}\label{2}
$ tp^{H_1}(\bar{a})=tp^{H_2}(\bar{b})\Rightarrow LKer(\nu)=LKer(\mu) $
\end{lem}
\begin{proof}
Let $tp^{H_1}(\bar{a})=tp^{H_2}(\bar{b})$. Let $u\in \Phi(X)$ and $u\in LKer \nu$. Then according to  Lemma \ref{AssociatedForm}, $\tilde{u}\in tp^{H_1}(\bar{a})$. Consequently  $\tilde{u}\in tp^{H_2}(\bar{b})$ and therefore $u\in LKer \mu$  according to the same Lemma.
\end{proof}
In virtue of Lemmas \ref{1} and \ref{2}, Theorem \ref{twoTypes} is proved.
\end{proof}

\section{Logically perfect algebras}\label{LPA}
The purpose of this section is to present some results concerning logically perfect algebras. Some authors call an algebra $H$ {\it  homogeneous} if every automorphism between two finitely generated subalgebras of $H$ can be extended to an automorphism of $H$. It is easy to see that every homogeneous algebra is logically perfect \cite{Plotkin_IsotAlg}. 

It is obvious that every finite dimensional linear space $V$  is a homogeneous algebra, and therefore $V$  is logically perfect. On the other hand, it is easy to see that  free finitely generated semigroups and free finitely generated Abelian groups are not homogeneous, nevertheless we will show below that all of them are  logically perfect. Thus the homogeneity  is not a necessary condition for an algebra to be logically perfect. There is a logical condition equivalent to homogeneity obtained by the author. This condition  is cited in \cite{Plotkin_IsotAlg} and called there {\it strictly logically perfectness}.  The following generalization of homogeneity will be useful.

\begin{defin}
An algebra $H$ is called {\it weakly homogeneous} if for every isomorphism $\varphi :A\to B$ between two its finitely generated subalgebras $A$ and $B$,  the following condition is satisfied: if  $\varphi$ itself and its inverse map $\varphi ^{-1} :B\to A$ both can be extended to endomorphisms of $H$ then $\varphi$ can be extended to an automorphism of  $H$.
\end{defin}
\begin{theorem}\label{W_Hom}
Every weakly homogeneous finitely generated free algebra is logically perfect.
\end{theorem}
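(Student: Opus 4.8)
The statement to prove is that every weakly homogeneous finitely generated free algebra $H = W(X_0)$ (say on generators $X_0 = \{z_1, \dots, z_k\}$) is logically perfect. So I would start with two points $\mu, \nu : W(X) \to H$, $X = \{x_1, \dots, x_n\}$, with $LKer(\mu) = LKer(\nu)$, and I must produce an automorphism $\varphi$ of $H$ with $\mu = \varphi \circ \nu$. By Theorem \ref{twoTypes}, the hypothesis $LKer(\mu) = LKer(\nu)$ is equivalent to $tp^H(\bar a) = tp^H(\bar b)$ where $\bar a = (\mu(x_1), \dots, \mu(x_n))$ and $\bar b = (\nu(x_1), \dots, \nu(x_n))$; it is convenient to work with the model-theoretic formulation, because it lets me quantify over elements of $H$ and talk about the (finitely many) generators explicitly.

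\medskip

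\textbf{Key steps.} First I would set up a candidate map $\varphi$ on the subalgebra $B = \langle \bar b \rangle$ generated by $\bar b$: send $b_i \mapsto a_i$. For this to be a well-defined homomorphism $B \to A := \langle \bar a \rangle$ I need $\mathrm{Ker}\,\nu \subseteq \mathrm{Ker}\,\mu$, which follows because the atomic formula $w(x) \equiv w'(x)$ lies in $LKer(\nu)$ iff $\nu(w) = \nu(w')$, and equality of the LG-types (via the atomic formulas in $\Phi(X)$) forces $\mathrm{Ker}\,\nu = \mathrm{Ker}\,\mu$; hence $\varphi : B \to A$ and symmetrically $\varphi^{-1} : A \to B$ are well-defined isomorphisms of finitely generated subalgebras of $H$. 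Second — and this is where I use that $H$ is \emph{free} and finitely generated — I would show that $\varphi$ extends to an endomorphism of $H$, and likewise $\varphi^{-1}$. The idea: since $H = W(z_1, \dots, z_k)$ is free, an endomorphism is determined by arbitrary choices of images of $z_1, \dots, z_k$; the content is to find elements $c_1, \dots, c_k \in H$ with the property that the induced endomorphism restricts to $\varphi$ on $B$. To get these I would write each generator $z_j$ of $H$, when the type of $\bar b$ is computed, in relation to the tuple $\bar b$: the statement ``there exist elements whose images under the relevant word maps are $b_1, \dots, b_n$ and which together with $\bar b$ satisfy the same quantifier-free relations as $z_1, \dots, z_k$ together with $\bar a$'' can be written as a first-order formula (an existential statement with finitely many conjuncts of atomic formulas, since $H$ is finitely generated so the relevant data is finite) satisfied by $\bar a$ in $H$; since $tp^H(\bar a) = tp^H(\bar b)$, the same formula holds of $\bar b$, producing the desired $c_j$'s and hence the endomorphism extending $\varphi$. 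Applying the symmetric argument to $\varphi^{-1}$ gives an endomorphism of $H$ extending $\varphi^{-1}$. Third, invoke weak homogeneity: $\varphi : B \to A$ is an isomorphism of finitely generated subalgebras such that both $\varphi$ and $\varphi^{-1}$ extend to endomorphisms of $H$, so by definition $\varphi$ extends to an automorphism $\Phi$ of $H$. Finally, $\Phi(\nu(x_i)) = \Phi(b_i) = \varphi(b_i) = a_i = \mu(x_i)$ for all $i$, so $\mu = \Phi \circ \nu$, as required.

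\medskip

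\textbf{Main obstacle.} The delicate step is the second one: showing that $\varphi$ (and $\varphi^{-1}$) extend to endomorphisms of $H$ purely from the equality of MT-types. This requires encoding ``$z_1, \dots, z_k$ generate $H$ and have such-and-such relationship to $\bar a$'' as a \emph{first-order} formula in the variables $\bar x$ alone — legitimate precisely because $H$ is finitely generated (only finitely many generators to existentially quantify) and because, $\bar a$ being a tuple in the free algebra $H$, each $a_i$ is a fixed word $a_i = t_i(z_1, \dots, z_k)$, so the formula $\exists \tilde z_1 \cdots \exists \tilde z_k \big( \bigwedge_i x_i \equiv t_i(\tilde z_1, \dots, \tilde z_k) \big)$ (possibly augmented with whatever finite relational data is needed to pin down the images of the generators) is satisfied by $\bar a$ in $H$ and therefore by $\bar b$. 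One must be careful that the witnesses obtained for $\bar b$ really do induce an endomorphism restricting to $\varphi$ on $B$, i.e. that they respect all of $\mathrm{Ker}$ appropriately; this is where the full strength of $tp^H(\bar a) = tp^H(\bar b)$ (not just equality of kernels) is used, and it is the part of the argument demanding the most care.
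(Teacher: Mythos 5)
Your proposal is correct and is essentially the paper's own argument: the paper likewise writes each $a_i$ as a word $w_i$ in the free generators, uses the existential formula $\exists y_1\cdots\exists y_n(x_1\equiv w_1\wedge\cdots\wedge x_k\equiv w_k)$ (encoded as an LG-formula of sort $X$ via $s_*$, where you instead pass to MT-types by Theorem \ref{twoTypes}) to produce endomorphisms $\sigma,\tau$ of $H$ with $\sigma(a_i)=b_i$ and $\tau(b_i)=a_i$, and then applies weak homogeneity to the restriction of $\sigma$ to $\langle a_1,\dots,a_k\rangle$. The hesitation in your final paragraph is unnecessary: since $H$ is free, any assignment of images to its free generators extends to an endomorphism, and $\sigma(a_i)=b_i$ alone already forces $\sigma$ to restrict on $\langle a_1,\dots,a_k\rangle$ to the map $a_i\mapsto b_i$ (with $\tau\circ\sigma$ fixing the $a_i$, hence giving mutually inverse isomorphisms), so no additional relational data needs to be encoded in the formula.
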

\begin{proof}
Let $H$ be weakly homogeneous and $e_1,...,e_n$ be free generators of $H$.
Let $X=\{x_1 ,...,x_k \} $. Consider two points $\nu ,\mu : W(X)\to H$ and suppose that $LKer\nu=LKer\mu$. Let $\nu (x_i )=a_i$ and $\mu (x_i) =b_i$ for all $i=1,...,k$. Take $Y=\{y_1,...,y_n\}$, such that $X\bigcap Y=\emptyset$, and define a  homomorphism $\gamma : W(Y)\to H$   by the values: $\gamma (y_i)= e_i, \; i=1,...,n$. Let  $w_1,...,w_k\in W(Y)$  be any $k$  words such that $a_i=\gamma (w_i), \;i=1,..., k$.

Consider a formula $u$ of sort $X$ of the kind $u=s_* (v)$ where 
$$
v=(\exists y_1)...(\exists y_n)(x_1\equiv w_1\wedge ...\wedge x_k\equiv w_k)
$$
and $s: W(X\bigcup Y)\to W(X)$ defined by   $s(x_i )=x_i ,s(y_1)=...=s(y_n)=x_1$.

It is obvious that $\nu \in Val_H (u)$. Thus under assumption, $\mu \in Val_H (u)$ and therefore $\mu\circ s \in Val_H (v)$. The last one means that there exists a homomorphism $\delta : W(Y)\to H$ such that $b_i=\delta (w_i), \;i=1,..., k$. Define an endomorphism $\sigma$ of $H$ setting $\sigma (e_i)=\delta (y_i), \; i=1,..., n$, that is, $ \sigma \circ \gamma =\delta$. We have  $\sigma (a_i)=\sigma (\gamma (w_i))= \delta (w_i)=b_i$ for $i=1,...,k$. Hence  $\sigma$ determines a homomorphism $\varphi$ of the subalgebra $A$ generated by $a_1,...,a_k$ on the subalgebra $B$ generated by $b_1,...,b_k$. 

Similarly, we can define an endomorphism $\tau$ of $H$ such that $\tau (b_i)=a_i$ for $i=1,...k$. Consequently $\sigma\circ  \tau (b_i)=b_i$ and $\tau \circ  \sigma (a_i)=a_i$ which means that the restriction $\varphi$ of $\sigma$ to $A$ is an isomorphism  of $A$ on $B$ and  $\varphi ^{-1}$ is a restriction of $\tau$. Since $H$ is weakly homogeneous, $\varphi$ can be extended up to automorphism $\tilde {\varphi}$ of $H$ for which we have $\tilde {\varphi}\circ \nu =\mu $.
\end{proof}
\begin{lem}\label{abel}
Finitely generated free  Abelian groups and finitely generated free nilpotent groups are weakly homogeneous.
\end{lem}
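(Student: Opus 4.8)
The plan is to reduce, in both cases, to the situation in which the two subgroups are \emph{isolated} (root-closed), and then to treat free abelian and free nilpotent groups separately. Let $\varphi\colon A\to B$ be an isomorphism between finitely generated subgroups of $H$, extended to an endomorphism $\sigma$ of $H$, while $\varphi^{-1}$ is extended to an endomorphism $\tau$. Let $A_{*}=\{h\in H\mid h^{k}\in A\text{ for some }k\ge 1\}$ be the isolator of $A$ in $H$, and $B_{*}$ that of $B$. Since $H$ is finitely generated torsion-free nilpotent (the abelian case included), $A_{*}$ and $B_{*}$ are subgroups and again finitely generated, and roots in $H$ are unique. From $h^{k}\in A$ we get $\sigma(h)^{k}=\varphi(h^{k})\in B$, so $\sigma(A_{*})\subseteq B_{*}$; likewise $\tau(B_{*})\subseteq A_{*}$; and $\tau\sigma(h^{k})=h^{k}$ together with uniqueness of roots gives $\tau\sigma(h)=h$, so $\tau\sigma|_{A_{*}}=\mathrm{id}$ and, symmetrically, $\sigma\tau|_{B_{*}}=\mathrm{id}$. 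Hence $\sigma|_{A_{*}}\colon A_{*}\to B_{*}$ is an isomorphism with inverse $\tau|_{B_{*}}$ extending $\varphi$, and an automorphism of $H$ extending it extends $\varphi$ too. So we may assume $A=A_{*}$ and $B=B_{*}$ are isolated, with $\varphi$ extending to $\sigma\in\mathrm{End}(H)$ and $\varphi^{-1}$ to $\tau\in\mathrm{End}(H)$.

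\emph{The abelian case.} Now $H\cong\mathbb Z^{n}$, and an isolated subgroup of a finitely generated free abelian group is pure, hence a direct summand: write $H=A\oplus C=B\oplus D$. As $\varphi$ is an isomorphism, $\mathrm{rk}\,A=\mathrm{rk}\,B$, so $C$ and $D$ are free abelian of the same rank $n-\mathrm{rk}\,A$ and thus isomorphic; fix an isomorphism $\chi\colon C\to D$. Then $a+c\mapsto\varphi(a)+\chi(c)$ is an automorphism of $H$ extending $\varphi$, which proves weak homogeneity for finitely generated free abelian groups.

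\emph{The nilpotent case.} I would argue by induction on the nilpotency class $c$ of $H$, the base case $c=1$ being the abelian case above. Put $Z=\gamma_{c}(H)$; it is central and free abelian (freely generated by the basic commutators of weight $c$), and $H/Z$ is the free nilpotent group of class $c-1$ on the same generators. Since $Z$ is characteristic it is preserved by $\sigma$ and $\tau$, and since $\varphi$ carries $A\cap Z$ onto $B\cap Z$ it induces an isomorphism $\bar\varphi\colon AZ/Z\to BZ/Z$ which is extended, both ways, by the induced endomorphisms $\bar\sigma,\bar\tau$ of $H/Z$. By the induction hypothesis $\bar\varphi$ extends to an automorphism $\bar\psi$ of $H/Z$. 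Every automorphism of $H/Z$ lifts to an automorphism of $H$: choosing preimages of the $\bar\psi$-images of the generators gives an endomorphism $\psi_{0}$ of $H$ inducing $\bar\psi$ on $H/Z$, hence an automorphism on $H^{\mathrm{ab}}=(H/Z)^{\mathrm{ab}}$, so $\psi_{0}$ is onto by nilpotency and therefore an automorphism, finitely generated nilpotent groups being Hopfian. Since $\psi_{0}|_{A}$ and $\varphi$ agree modulo the central subgroup $Z$, the rule $f(a)=\psi_{0}(a)^{-1}\varphi(a)$ defines a homomorphism $f\colon A\to Z$. If $f$ extends to a homomorphism $g\colon H\to Z$, then $\psi=\psi_{0}\cdot g$ (the pointwise product, a homomorphism because $g(H)$ is central) induces the same map as $\psi_{0}$ on $H^{\mathrm{ab}}$, hence is an automorphism of $H$, and $\psi|_{A}=\varphi$; this closes the induction.

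\emph{Where the difficulty lies.} The main obstacle is the very last point: arranging that $f$ can be made to extend to $H$. Replacing $\psi_{0}$ by another lift of $\bar\psi$ changes $f$ by an arbitrary restriction to $A$ of a homomorphism $H\to Z$, while replacing $\bar\psi$ by another automorphism of $H/Z$ restricting to $\bar\varphi$ changes $f$ through automorphisms of $H$ that fix $A$ modulo $Z$; one must show that within these modifications $f$ can be brought into the image of the restriction map $\mathrm{Hom}(H,Z)\to\mathrm{Hom}(A,Z)$. This is where the \emph{freeness} of $H$ enters: one uses that $Z=\gamma_{c}(H)$ is free abelian, that isolation of $A$ makes $A\cap Z$ a direct summand of $Z$, and that the two-sided extendability of $\varphi$ pins down the action of $\bar\sigma$ on the isolator of the image of $A$ in $H^{\mathrm{ab}}$ (compare the one-dimensional instance in the abelian analysis). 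A convenient way to organize this is to pass to the rational Mal'cev completion, where $H$ is a lattice in the free nilpotent Lie group over $\mathbb Q$ and the extension of $f$ becomes a problem of linear algebra over $\mathbb Q$, handled as in the abelian case, after which purity of $A$ allows one to descend to $\mathbb Z$. Carrying this last step out is the technical heart of the proof.
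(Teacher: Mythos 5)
Your reduction to isolated subgroups is sound (uniqueness of roots in torsion-free nilpotent groups gives $\tau\sigma=\mathrm{id}$ on $A_*$, so $\sigma|_{A_*}\colon A_*\to B_*$ is an isomorphism extending $\varphi$), and your treatment of the abelian case is complete and correct: isolated means pure, hence a direct summand, and any isomorphism between summands of equal rank extends. This is a genuinely different and cleaner route than the paper's, which instead picks Smith-type bases ($a_i=p_ig_i$, $b_i=q_if_i$), writes out the integer matrices of $\sigma$ and $\tau$, deduces from $\sigma\tau=\mathrm{id}$ on the subgroups that the upper-left $k\times k$ minor of the matrix of $\sigma$ is $\pm 1$, and extends by hand; your version hides that computation in the structure theory of pure subgroups of $\mathbb{Z}^n$.

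The nilpotent case, however, is not a proof. Everything up to the definition of $f(a)=\psi_0(a)^{-1}\varphi(a)\in\mathrm{Hom}(A,Z)$ is fine (the induction set-up, the lifting of automorphisms of $H/Z$ via surjectivity on $H^{\mathrm{ab}}$ plus Hopficity, the centrality computations), but the decisive step --- showing that, after admissible changes of $\bar\psi$ and of the lift $\psi_0$, the error homomorphism $f$ lies in the image of the restriction map $\mathrm{Hom}(H,Z)\to\mathrm{Hom}(A,Z)$ --- is exactly what you leave unproved, and you say so yourself. This is a real missing idea, not a routine verification: the restriction map can fail to be surjective even for isolated $A$. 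For instance, let $H$ be free nilpotent of class $2$ on $x_1,x_2$, $Z=\gamma_2(H)=\langle[x_1,x_2]\rangle$, and $A=\langle x_1^2[x_1,x_2]\rangle$; then $A$ is isolated, but every $g\in\mathrm{Hom}(H,Z)$ kills $H'$ and so sends the generator of $A$ to $g(x_1)^2$, an even power of $[x_1,x_2]$, so the restrictions form a subgroup of index $2$ in $\mathrm{Hom}(A,Z)\cong\mathbb{Z}$. Hence the extendability of $f$ must be extracted from the two-sided extendability hypothesis (through $\sigma$ and $\tau$) and the freedom in choosing $\bar\psi$ and $\psi_0$, and your proposal only gestures at how (Mal'cev completion, descent by purity) without carrying it out. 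For comparison, the paper does not induct on the class at all: it passes to $G=H/H'$, applies its abelian result there, lifts the resulting basis of $G$ to a basis $h_1,\dots,h_n$ of $H$ (using that preimages of a basis of $H/H'$ form a basis of $H$), and defines the extension by $\Phi(h_i)=\sigma(h_i)$ for $i\le k$, i.e.\ it forces agreement with $\sigma$ on basis elements directly instead of correcting a lift by a central homomorphism; the passage from that agreement back to agreement on $A$ is the delicate point there, and it is essentially the same subtlety your scheme isolates but does not resolve.
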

\begin{proof}
{\bf 1.} We start with considering  Abelian groups. Let $G$ and $F$ be free Abelian groups of the same rank $n$. Let $A$ and $B$ be two subgroups of $G$ and $F$ respectively which are isomorphic by means of an isomorphism $\varphi :A\to B$. We will prove that if $\varphi $ and  $\varphi ^{-1}$ both can be extended up to homomorphisms $\sigma :G\to F$ and $\tau : F\to G$ respectively,  then $\varphi $ can be extended up to an isomorphism of $G$ onto $F$.

It is known (\cite{Magnus}, Theorem 3.5) that there exists a base $g_1,...,g_n$ of $G$ and a base $a_1,...,a_k$ of $A$ such that $a_i =p_i g_i$ for $1\leq i \leq k$ where $p_1, ..., p_k$ are integers and every $p_{i+1}$ is divisible by $p_i$ for $1\leq i \leq k-1$ . Exactly in the same way, there exists a base $f_1,...,f_n$ of $F$ and a base $b_1,...,b_k$ of $B$ such that $b_i =q_i f_i$ for $1\leq i \leq k$ and every integer  $q_{i+1}$ is divisible by the integer $q_i$ for $1\leq i \leq k-1$.

Let  $\sigma (g_i) =\sum _{j=1}^{n} s^j_i f_j$  and $\tau (f_i) =\sum _{j=1} ^{n} t^j_i g_j$. We obtain two integer matrices of order $n$: $S=||s^j_i||$ and $T=||t^j_i||$. Since  $\varphi :A\to B$ is an isomorphism, $\varphi$ provides an invertible integer matrix $||a^j_i||$ of order $k$, where $\varphi (a_i) =\sum _{j=1}^{k} a^j_i b_j$. Let  $||b^j_i||$ be its inverse matrix: $\varphi ^{-1}(b_i) =\sum _{j=1}^{k} b^j_i a_j$.

Since $\sigma (a_i) =\varphi (a_i)$, we obtain $p_i\sigma (g_i)=\sum _{j=1}^{k} a^j_i b_j =\sum _{j=1}^{k} a^j_i q_j f_j$ for $1\leq i \leq k$. Thus for all $1\leq i \leq k$ we have $p_i\sum _{j=1}^{n} s^j_i f_j=\sum _{j=1}^{k} a^j_i q_j f_j$. This  implies that $p_is^j_i = a^j_i q_j $ for $1\leq i,j \leq k$ and $p_i s^j_i =0$ for $1\leq i \leq k$, $k+1\leq j \leq n$. In view of  the definitions of $p_i, q_i$, we have $p_1=q_1$ and $s^j_i =0$ for all $1\leq i \leq k$ and $k+1\leq j \leq n$.

By duality, we obtain  $q_it^j_i = b^j_i p_j $ for $1\leq i,j \leq k$ and $t^j_i =0$ for all $1\leq i \leq k$ and $k+1\leq j \leq n$. Therefore we obtain for all $1\leq i,j \leq k$ :

\begin{equation}
\sum _{l=1}^{k}s^j_l t^l_i=\sum _{l=1}^{k}\frac {a^j_l q_j}{p_l }\frac {b^l_i p_l}{q_i} =\sum _{l=1}^{k}\frac {q_j}{q_i}a^j_l{b^l_i}=
\begin{cases}

1,  &\text{if $i=j$;}\\
0,   &\text{if $i\not= j$.}
\end{cases}
\end{equation}

Consider the left corner $k$-th minor $M$ of the matrix $S$, that is, the determinant of the matrix $||s^j _i||_{1\leq i,j \leq k}$. According to (1) $M=1$ or $M=-1$. Define map $\tilde{\varphi} : G\to F$ setting
\begin{equation}
\tilde{\varphi}(g_i)=
\begin{cases}
\sigma (g_i), &\text{if $i\leq k$;}\\
f_i, &\text{if $ k+1 \leq i\leq n$.}
\end{cases}
\end{equation}
The matrix $V$ of this map is
\begin{equation*}
V =
\left  (
\begin{matrix}
s^1_1 & ...&s^1_k &0&...&0 \\
... & ... & ...&0&...&0\\
s^k_1 & ...&s^k_k &0&...&0 \\
0 & ...&0 &1&...&0 \\
0 & ...&0&0&1...&0 \\
0 &...&0&0&...&1
\end{matrix}
\right )
\end{equation*}

We see that $Det V =M=\pm 1$ and therefore $\tilde{\varphi}$ is an isomorphism. By construction, $\tilde{\varphi}(a_i)=m_i \tilde{\varphi}(g_i)=m_i\sigma(g_i)=\sigma(a_i)=\varphi (a_i)$ for all $i\leq k$. Consequently  $\tilde{\varphi}$ extends $\varphi$.
\par
{\bf 2.} Now let $H$ be a finitely generated free nilpotent group of class $c>1$ and rank $n$. Let $A$ and $B$ be two subgroups of $H$  which are isomorphic by means of an isomorphism $\vp :A\to B$. Let $\vp $ and  $\vp ^{-1}$ both can be extended up to endomorphisms $\sigma $ and $\tau$ of $H$ respectively. 

The quotient group $G=H/H'$ is a free Abelian group of the same rank $n$. Let $\eta : H\to G$ be the corresponding epimorphism. Then $\bar{A} =\eta (A)$ and $\bar{B}=\eta (B)$ are isomorphic subgroups of $G$ under isomorphism $\bar{\vp} =\eta\circ \vp \circ \eta ^{-1}$.  This isomorphism is contained in the endomorphism $\bar{\sigma} =\eta\circ \sigma \circ \eta ^{-1}$ and the inverse isomorphism $\bar{\vp} ^{-1}$ is contained in the endomorphism $\bar{\tau} =\eta\circ \tau \circ \eta ^{-1}$. Thus we can apply the  fact proved above in the point {\bf 1}, that is, $\bar{\vp}$ can be extended up to automorphism $\bar {\Phi}$ of $G$.

Consider this extension in details. A base $g_1,...,g_n$ of Abelian group $G$ and a base $\bar {a}_1,...,\bar{a}_k$ of its subgroup $\bar{A}$ are chosen such that $\bar{a}_i = g_i ^{p_i}$ for $1\leq i \leq k$ (now we use the multiplicative notation).
The automorphism $\bar {\Phi }$ of $G$ extending $\bar{\vp}$ is constructed  in  such a way that $\bar{\Phi}(g_i)=\bar{\sigma} (g_i)$ for  $i\leq k$. The elements  $f_i =\bar{\Phi}(g_i)$ for $i=1, \dots , n$ form a base of $G$ in which first $k$ elements are equal to corresponding $\bar{\sigma} (g_i)$. 

It is known from the theory of nilpotent groups (see for example \cite{Kurosh}) that  a system $h_1, \dots ,h_n$ of elements of  $H$ is a system of free generators of some free nilpotent subgroup of the same class if and only if the the system $\eta (h_1), \dots,\eta (h_n) $ is linear independent in $G=H/H'$. So if $\eta (h_1), \dots,\eta (h_n) $ is a base of $G$ then $h_1, \dots ,h_n$ is a base of a free nilpotent subgroup $H_0$ of $G$. Since $\eta (H_0)=G$, we have $H_0H'=H$. The last one implies that $H_0=H$. We obtain that if $\eta (h_1), \dots,\eta (h_n) $ is a base of $G$ then $h_1, \dots ,h_n$ is a base of $H$. Below we apply this property of  finitely generated free nilpotent groups.

There exist bases $h_1, \dots ,h_n$ and $u_1, \dots ,u_n$ of $H$ such that $\eta (h_i)=g_i$ and $\eta (u_i)= f_i$ for $1\leq i \leq n$. Of course we can chose $u_i =\sigma (h_i)$ for  $1\leq i \leq k$ because $\eta (\sigma (h_i))=\bar{\sigma} (g_i)=f_i$ for  $1\leq i \leq k$. 

Now we define an automorphism $\Phi$ of $H$ setting $\Phi (h_i)=u_i$ for $1\leq i \leq n$. On the other hand, elements $h_i^{p_i}$ $1\leq i \leq k$ form a base of the free nilpotent subgroup $AH'$ because $\eta (h_i^{p_i})=g_i^{p_i}=\bar{a_i}$. We have $\Phi (h_i^{p_i})=(\Phi (h_i))^{p_i}=u_i^{p_i}=(\sigma (h_i))^{p_i}=\sigma (h_i ^{p_i})$. Thus $\Phi$ coincides with $\sigma$ on the subgroup $AH'$. Since $\sigma$ contains $\vp$ which is defined on $A\subset AH' $, $\Phi$ is an extension of $\vp$. 
\end{proof}

\begin{lem}\label{Sem}
Every finitely generated free semigroup is weakly homogeneous. 
\end{lem}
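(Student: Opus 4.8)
The plan is to exploit the length grading of a free semigroup together with the fact that its only ``length--preserving'' endomorphisms are, on any prescribed finite set of letters, letter--to--letter substitutions. Write $H$ for the free semigroup on the free generators $e_1,\dots,e_n$ and let $\ell\colon H\to\mathbb{N}$ assign to a word its number of letters. For every endomorphism $\rho$ of $H$ one has $\ell(\rho(w))\ge\ell(w)$: if $w=e_{i_1}\cdots e_{i_m}$ then $\rho(w)=\rho(e_{i_1})\cdots\rho(e_{i_m})$ and each factor has length at least $1$; moreover $\ell(\rho(w))=\ell(w)$ holds exactly when $\rho$ sends every letter occurring in $w$ to a single letter, in which case $\rho(w)$ is literally the word obtained from $w$ by that substitution. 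In the other direction we shall use only that any permutation $\pi$ of $\{e_1,\dots,e_n\}$ extends to an automorphism of $H$, with inverse the automorphism induced by $\pi^{-1}$.

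Now let $\varphi\colon A\to B$ be an isomorphism between finitely generated subsemigroups of $H$, and suppose $\varphi$ extends to an endomorphism $\sigma$ of $H$ and $\varphi^{-1}$ to an endomorphism $\tau$ of $H$; then $\tau\circ\sigma$ fixes $A$ pointwise and $\sigma\circ\tau$ fixes $B$ pointwise. Fix a finite generating set $a_1,\dots,a_k$ of $A$ and let $E_A\subseteq\{e_1,\dots,e_n\}$ be the set of letters occurring in the words $a_1,\dots,a_k$. A letter lies in $E_A$ if and only if it occurs in some element of $A$, so $E_A$ does not depend on the generating set, and every element of $A$ is a word in the letters of $E_A$; define $E_B$ from $B$ likewise. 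For each $j$ the chain $\ell(a_j)=\ell(\tau\sigma(a_j))\ge\ell(\sigma(a_j))\ge\ell(a_j)$ forces $\ell(\sigma(a_j))=\ell(a_j)$, so by the remark above $\sigma$ sends each letter of $a_j$ to a single letter and $\sigma(a_j)$ is the word obtained from $a_j$ by this substitution. Ranging over $j$, $\sigma$ maps each letter of $E_A$ to a single letter, and the set of letters occurring in $\sigma(a_1),\dots,\sigma(a_k)$ is exactly $\sigma(E_A)$. But $\sigma(a_j)=\varphi(a_j)$ and $\varphi(a_1),\dots,\varphi(a_k)$ generate $B$, so that set is precisely $E_B$; hence $\sigma$ restricts to a surjection $E_A\to E_B$ and $|E_B|\le|E_A|$. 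Running the symmetric argument with $\tau$, $\varphi^{-1}$ and the generators $\varphi(a_1),\dots,\varphi(a_k)$ of $B$ yields $|E_A|\le|E_B|$, so $|E_A|=|E_B|$ and $\sigma$ restricts to a bijection $E_A\to E_B$.

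To finish, extend the bijection $\sigma|_{E_A}\colon E_A\to E_B$ to a permutation $\pi$ of $\{e_1,\dots,e_n\}$, which is possible since $\{e_1,\dots,e_n\}\setminus E_A$ and $\{e_1,\dots,e_n\}\setminus E_B$ both have $n-|E_A|$ elements, and let $\tilde\varphi$ be the automorphism of $H$ induced by $\pi$. Any $a\in A$ is a word in the letters of $E_A$; applying $\tilde\varphi$ replaces each such letter $e$ by $\pi(e)=\sigma(e)$, which is also the effect of $\sigma$ on $a$, so $\tilde\varphi(a)=\sigma(a)=\varphi(a)$. Thus $\tilde\varphi$ is an automorphism of $H$ extending $\varphi$, which is what weak homogeneity requires.

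I expect the only delicate point to be the bookkeeping in the middle paragraph: converting the chain of length inequalities into the assertion that $\sigma$ acts on $A$ through a single-letter substitution on $E_A$, and identifying the image of that substitution with $E_B$ exactly, so that the cardinalities can be compared and matched. The remaining ingredients — that endomorphisms of a free semigroup do not decrease length, and that permutations of the free generators are automorphisms — are routine.
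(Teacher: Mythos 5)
Your proof is correct and follows essentially the same route as the paper: use the length-nondecreasing property of endomorphisms to show $\sigma$ acts as a letter-to-letter substitution on the letters occurring in $A$, identify its image with the letters occurring in $B$, and extend by an arbitrary bijection of the complementary generators to get an automorphism agreeing with $\varphi$ on $A$. The only cosmetic difference is that the paper gets the bijection $E_A\to E_B$ from the restrictions of $\sigma$ and $\tau$ being mutually inverse, while you get it from surjectivity in both directions plus finiteness; both are fine.
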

\begin{proof}
Let $S$ be a free semigroup with the set $X=\{x_1,\dots, ,x_k \}$ of free generators . Let $\vp :A\to B$ be an automorphism between two subsemigroups $A$ and $B$ of $S$, where $A$ and $B$ are generated by elements  $a_1, \dots , a_n$ and $b_1, \dots , b_n$ respectively. We may assume that $\vp (a_i)=b_i$ for $1\leq i\leq n$. 

Suppose that there exist two endomorphisms $\sigma$ and $\tau$ first of which extends $\vp$ and the second one extends $\vp ^{-1}$. Thus $\sigma (a_i)=b_i$ and $\tau (b_i)=a_i$  Denote by $\vert w\vert$ the length of the word $w$ in alphabet $X$. Since  $\vert \sigma (w)\vert \geq\vert w\vert$ and $\vert \tau (w)\vert \geq\vert w\vert$ for every $w\in S$, we obtain that $\vert a_i\vert =\vert b_i\vert $. Let $y_1, \dots ,y_p$ be the list of all variables from $X$ which occur in $a_1, \dots , a_n$ and 
$z_1, \dots ,z_q$  be the analogical  list of all variables which occur in $b_1, \dots , b_n$. It is obvious that  $\vert \sigma (y_i)\vert =1$ for all $1\leq i\leq p$ and $\vert \tau (z_i) \vert =1$ for all $1\leq i\leq q$.  Therefore we have that $\sigma (y_i) \in \{z_1, \dots ,z_q \}$ and $\tau (z_i) \in \{y_1, \dots ,y_p \}$. 

Since $\tau (\sigma (a_i))=a_i$ and $\sigma (\tau (b_i))=b_i$ for  $1\leq i\leq n$, we have that the restrictions of $\sigma $ and $\tau$ to variables $y_1, \dots ,y_p $ and $z_1, \dots ,z_q $ respectively are mutually inverse maps. Thus $p=q$ and $\sigma $ and $\tau$ induce two  mutually inverse partial one-to-one transformations of $X$. Let $\alpha $ be a bijection of $X\setminus \{y_1, \dots ,y_p\} $ on $X\setminus \{z_1, \dots ,z_p\}$.  Setting $\tilde {\vp } (y_i)=\sigma (y_i)$   for $1\leq i\leq p$ and $\tilde{\vp } (x)=\alpha (x)$ for all other variables from $X$, we obtain the automorphism  $\tilde{\vp}$ of $S$ which extends $\vp$. 
\end{proof}
Lemmas \ref{abel}, \ref{Sem} and \ref{W_Hom} give us the following result:
\begin{theorem}\label{three log.perf}
 Finitely generated free Abelian groups, finitely generated free nilpotent groups of any class and finitely generated semigroups are logically perfect.
\end{theorem}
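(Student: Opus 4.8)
The plan is simply to assemble the three preceding results, since Theorem \ref{W_Hom} already reduces logical perfectness of a finitely generated free algebra to its weak homogeneity. Thus it suffices to observe that each algebra named in the statement is a finitely generated free algebra in a suitable variety $\Theta$, and then to quote Lemmas \ref{abel} and \ref{Sem}.

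First I would record the identifications: a finitely generated free Abelian group is precisely a finitely generated free algebra in the variety of Abelian groups; a finitely generated free nilpotent group of class $c$ is the finitely generated free algebra in the variety of nilpotent groups of class at most $c$ (with $c=1$ already subsumed by the Abelian case); and a finitely generated free semigroup is the finitely generated free algebra in the variety of semigroups. Consequently each of them satisfies the hypothesis of Theorem \ref{W_Hom} as soon as weak homogeneity is established.

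Then I would invoke Lemma \ref{abel} to get weak homogeneity of finitely generated free Abelian groups and finitely generated free nilpotent groups, and Lemma \ref{Sem} to get weak homogeneity of finitely generated free semigroups. Applying Theorem \ref{W_Hom} to each case yields that all these algebras are logically perfect, which is exactly the assertion. (Here "finitely generated semigroups" in the statement is to be read as "finitely generated free semigroups", in accordance with Lemma \ref{Sem}.)

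There is no genuine obstacle at this stage: all the substantive work has already been carried out in Lemmas \ref{abel} and \ref{Sem} and in Theorem \ref{W_Hom}. The only points requiring a moment's care are that the chosen varieties really do make the listed groups and semigroups free objects, and that the degenerate nilpotency class $c=1$ is not overlooked when citing Lemma \ref{abel}.
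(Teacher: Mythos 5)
Your proposal is correct and is exactly the paper's argument: the theorem is obtained by combining Theorem \ref{W_Hom} with Lemmas \ref{abel} and \ref{Sem}, just as you describe (and, as you note, ``finitely generated semigroups'' in the statement is indeed to be read as finitely generated \emph{free} semigroups).
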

The method which has been used to prove the theorem above can not be applied  to non-Abelian finitely generated  free groups. 
\begin{prop}\label{contrexam}
Free groups of rank 2 are not weakly homogeneous.
\end{prop}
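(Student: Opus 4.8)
The plan is to produce a single isomorphism between finitely generated subgroups that witnesses the failure of weak homogeneity. Let $F$ be the free group of rank $2$ with basis $\{x,y\}$, and recall that an element of a free group is called \emph{primitive} if it belongs to some basis. I would take the cyclic subgroups $A=\langle x^{2}y^{3}\rangle$ and $B=\langle x\rangle$ and the isomorphism $\varphi\colon A\to B$ determined by $\varphi(x^{2}y^{3})=x$; both groups are infinite cyclic (as $F$ is torsion free), so $\varphi$ is a well-defined isomorphism with $\varphi^{-1}(x)=x^{2}y^{3}$. The reason for the choice $x^{2}y^{3}$ is twofold: its image in $F/[F,F]\cong\mathbb{Z}^{2}$ is the vector $(2,3)$, whose coordinates are coprime, which is exactly what allows $\varphi$ and $\varphi^{-1}$ to be extended to endomorphisms of $F$; yet $x^{2}y^{3}$ is not a primitive element of $F$, which prevents $\varphi$ from being extended to an automorphism.

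To verify extendability of $\varphi$, I would use that $\gcd(2,3)=1$: pick integers $a,b$ with $2a+3b=1$, say $a=-1$, $b=1$, and let $\sigma\colon F\to F$ be the endomorphism with $\sigma(x)=x^{-1}$, $\sigma(y)=x$. Then $\sigma(x^{2}y^{3})=x^{2a}x^{3b}=x^{2a+3b}=x$, so $\sigma|_{A}=\varphi$ (note also $\sigma(A)=B$). For the reverse direction, let $\tau\colon F\to F$ be the endomorphism with $\tau(x)=x^{2}y^{3}$ and $\tau(y)=y$; then $\tau|_{B}=\varphi^{-1}$. Hence both $\varphi$ and $\varphi^{-1}$ extend to endomorphisms of $F$, so the hypothesis of weak homogeneity is met for this $\varphi$.

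It remains to see that $\varphi$ extends to no automorphism of $F$. If $\alpha\in\mathrm{Aut}(F)$ had $\alpha|_{A}=\varphi$, then $\alpha(x^{2}y^{3})=x$, so $x^{2}y^{3}=\alpha^{-1}(x)$; since $\alpha^{-1}$ carries a basis to a basis, $\{x^{2}y^{3},\alpha^{-1}(y)\}$ would be a basis of $F$ and $x^{2}y^{3}$ would be primitive. Thus the whole argument reduces to the one genuinely non-formal point: $x^{2}y^{3}$ is not a primitive element of the rank-$2$ free group. I would prove this either by running Whitehead's algorithm --- $x^{2}y^{3}=xxyyy$ is cyclically reduced of length $5$, a direct inspection shows that no Whitehead automorphism lowers its cyclic length below $5$, hence it is of minimal length in its $\mathrm{Aut}(F)$-orbit, while the orbit of a primitive element contains $x$, of length $1$ --- or by quoting the classification of primitive elements of $F_{2}$ as the conjugates of the Christoffel (Sturmian) words, whose cyclically reduced representatives never contain both a subword $xx$ and a subword $yy$, whereas $xxyyy$ does. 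This non-primitivity statement is the main obstacle; once it is in place the proposition follows, and it is precisely the reason the technique of the preceding theorem, which transported tuples by automorphisms, cannot be carried over to non-abelian free groups.
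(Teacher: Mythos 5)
Your proposal is correct, but it takes a genuinely different route from the paper. Structurally both proofs do the same thing: exhibit an isomorphism between cyclic subgroups that extends, in both directions, to endomorphisms but not to an automorphism. You use $A=\langle x^2y^3\rangle$, $B=\langle x\rangle$, with the extensions $\sigma(x)=x^{-1},\sigma(y)=x$ and $\tau(x)=x^2y^3,\tau(y)=y$ (these computations are fine, and the reduction of non-extendability to the non-primitivity of $x^2y^3$ is correct). The paper instead takes $a=x_1^2x_2x_1^{-1}x_2$, $b=x_1x_2$, with $\sigma(x_1)=x_1x_2,\sigma(x_2)=1$ and $\tau(x_1)=x_1^2x_2,\tau(x_2)=x_1^{-1}x_2$, and then rules out an automorphism sending $a$ to $b$ by a completely elementary, self-contained argument: exponent sums in the abelianization force certain parities on the would-be images $w_1,w_2$ of the generators, and a finite check of the induced map onto $S_3$ yields a contradiction. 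Your argument instead rests on the one nontrivial fact that $x^2y^3$ is not primitive in $F_2$, for which you offer Whitehead's algorithm (the inspection is finite and does succeed --- no Whitehead move takes the cyclic word $xxyyy$ below length $5$, while the orbit of a primitive contains a length-one word) or the known classification of primitive elements of $F_2$ (no cyclically reduced primitive contains both $xx$ and $yy$, e.g.\ Cohen--Metzler--Zimmermann or Osborne--Zieschang). So your version buys conceptual clarity (the obstruction is exactly primitivity) at the price of either carrying out the Whitehead inspection in full or citing an external classification, whereas the paper's choice of elements allows an entirely elementary verification using only exponent sums and a homomorphism to $S_3$. To make your proof fully self-contained you should actually write out the finitely many Whitehead automorphisms of $F_2$ and their effect on $xxyyy$, or give a precise reference for the primitivity criterion; as written, that step is only sketched.
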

\begin{proof}
Consider the free group $\mathbb{F}_2$ of rank 2 free generated by $x_1,x_2$. Let $a=x_1^2 x_2 x_1^{-1}x_2$ and $b=x_1x_2$. Define endomorphisms $\sigma$ and $\tau$ setting $\sigma (x_1)=x_1x_2,\; \sigma (x_2)=1$ and $\tau (x_1)=x_1 ^2x_2,\; \tau (x_2)=x_1 ^{-1}x_2$. We see that $\sigma (a)=b$ and $\tau (b)=a$. Thus $\sigma $ induces an isomorphism of $\vp :\langle a \rangle\to \langle b \rangle$ and $ \tau$ induces the inverse isomorphism $\vp ^{-1}$.

Suppose that there exists an automorphism $\tilde{\vp }$ of  $\mathbb{F}_2$ which sends $a$ to $b$. Let $\tilde{\vp }(x_1) =w_1$ , $\tilde{\vp } (x_2) =w_2$, where $w_1$,$w_2$ are  words in symbols $x_1,x_2$.
Thus we have a relation in our free group: $x_1x_2 \equiv w_1^2 w_2 w_1^{-1}w_2$. (*)
  
This relation must be an identity in the group variety. 
Let $l_1, l_2$ be the sums of all exponents of $x_1, x_2$ incoming in $w_1$ and  $m_1,m_2$  the sums of all exponents of $x_1, x_2$ incoming in $w_2$ respectively. It is obvious that $l_1+2m_1=l_2+2m_2=1$. Thus $l_1, l_2$ must be odd numbers.

Consider the group $S_3$ of all permutations of the set $\{1,2,3\}$ . This group is a homomorphic image of $\mathbb{F}_2$ under the map $\gamma$ which maps  $x_1$ to $(213)$ and $x_2$ to $(132)$. Since $\gamma (x_1^2)=\gamma (x_2^2 ) =(123), \; \gamma (x_1x_2)=(312), \; \gamma (x_2x_1)=(231),\; \gamma (x_1x_2x_1) =\gamma (x_2x_1x_2)=(321), \; \gamma ((x_1x_2)^2)=\gamma (x_2x_1),\; \gamma ((x_2x_1)^2)=\gamma (x_1x_2)$, we obtain that  the following equalities are satisfied in $S_3$:  $w_1\equiv x_1x_2$ or $w_1\equiv x_2x_1$. For $w_2$ we have variants:  $ w_2\equiv 1, x_1, x_2,x_1x_2, x_2x_1, x_1x_2x_1$. Since $w_1,w_2$   generate $\mathbb{F}_2$, their images generate $S_3$. Therefore we have only three variants for $w_2$: $w_2\equiv x_1, x_2, x_1x_2x_1$.  Directly calculations show that in all mentioned cases  $\gamma (w_1^2 w_2 w_1^{-1}w_2)=(123)$ which contradicts to the identity (*).

Consequently  there is no automorphism of $\mathbb{F}_2$ sending $a$ to $b$. 
\end{proof}
Nevertheless all free finitely generated non-Abelian  free groups are logically perfect. This fact is proved in \cite{chloe} in view of Theorem \ref{twoTypes}.
 

\section{Isotyped algebras}\label{Iso}
We consider the following problem: in what cases isotyped algebras are necessarily isomorphic. At first, we generalize the result obtained in \cite{PZ}, Theorem 3.11.
\begin{theorem}\label{NecessaryIso}
If two algebras $H_1$ and $H_2$ from the same variety $\Theta$ are isotyped then for every finitely generated subalgebra $A$  of $H_1$ there exists a subalgebra $B$ of  $H_2$ isomorphic to $A$, and if $A$ is a proper subalgebra then $B$ can be chosen as a proper subalgebra too.
\end{theorem}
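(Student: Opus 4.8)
The plan is to translate the isotypedness hypothesis into the model-theoretic language via Theorem \ref{twoTypes} and then read off the required isomorphism from the coincidence of kernels of the associated points. First I would fix a finitely generated subalgebra $A\leq H_1$ with a generating tuple $a_1,\dots,a_n$, and, if $A$ is a proper subalgebra, also fix a witness $c\in H_1\setminus A$, which I adjoin as an extra coordinate. So set $\bar a=(a_1,\dots,a_n)$ in general, and $\bar a=(a_1,\dots,a_n,c)$ if $A\neq H_1$. Since $H_1$ and $H_2$ are isotyped, the $X$-$LG$-type $LKer(\nu)$ of the point $\nu$ associated with $\bar a$ is also an $X$-$LG$-type of $H_2$, i.e.\ $LKer(\nu)=LKer(\mu_0)$ for some point $\mu_0$ of $H_2$; by Theorem \ref{twoTypes} the corresponding tuple $\bar b$ of $H_2$ satisfies $tp^{H_2}(\bar b)=tp^{H_1}(\bar a)$. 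In the proper case write $\bar b=(b_1,\dots,b_n,d)$, otherwise $\bar b=(b_1,\dots,b_n)$. I then let $B$ be the subalgebra of $H_2$ generated by $b_1,\dots,b_n$.

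The isomorphism $A\cong B$ comes from comparing the ordinary kernels of the points $\nu,\mu\colon W(X)\to H_1,H_2$ with $X=\{x_1,\dots,x_n\}$, $\nu(x_i)=a_i$, $\mu(x_i)=b_i$. For any $w,w'\in W(X)$ the equality $w\equiv w'$ is an atomic formula whose free variables are among those listed in $\bar a$, so the statement ``$w\equiv w'\in tp^{H_1}(\bar a)$'' is literally ``$\nu(w)=\nu(w')$'', and likewise ``$w\equiv w'\in tp^{H_2}(\bar b)$'' is ``$\mu(w)=\mu(w')$''. Because the two $MT$-types coincide, $Ker\,\nu=Ker\,\mu$. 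By the homomorphism theorem for $\Theta$-algebras, $A=\operatorname{Im}\nu\cong W(X)/Ker\,\nu=W(X)/Ker\,\mu\cong\operatorname{Im}\mu=B$, and tracing the isomorphism shows it sends $a_i$ to $b_i$.

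For the properness clause, the point of carrying the extra coordinate $c$ is that ``$c\notin A$'' is not a single first-order statement but a (typically infinite) family of them: $c\notin A$ means $c\neq w(a_1,\dots,a_n)$ for every $w\in W(X)$, i.e.\ $\neg(x_{n+1}\equiv w)\in tp^{H_1}(\bar a)$ for every such $w$. Since $tp^{H_1}(\bar a)=tp^{H_2}(\bar b)$, each $\neg(x_{n+1}\equiv w)$ also lies in $tp^{H_2}(\bar b)$, so $d\neq w(b_1,\dots,b_n)$ for all $w\in W(X)$; hence $d\notin B$ and $B$ is a proper subalgebra of $H_2$.

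I do not expect a serious obstacle here; the only points needing care are the observation just made — that properness of a finitely generated subalgebra is detected by the type of its generating tuple \emph{augmented by one witness}, not by the type of the generating tuple alone — and the bookkeeping that an atomic or negated-atomic formula in $x_1,\dots,x_n$ is admissible as a formula in $x_1,\dots,x_{n+1}$, so that passing to the type of the longer tuple loses no information about the shorter one. Everything else is a direct application of Theorem \ref{twoTypes} together with the first isomorphism theorem for algebras in a variety.
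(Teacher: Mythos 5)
Your proposal is correct and follows essentially the same route as the paper: the isomorphism $A\cong B$ comes from the coincidence of the ordinary kernels (detected by the equalities $w\equiv w'$ in the type), and properness is handled exactly as in the paper by adjoining one witness coordinate and using the infinite family of formulas $\neg(x_{n+1}\equiv w)$, $w\in W(X)$. The only cosmetic difference is that you pass through MT-types via Theorem \ref{twoTypes}, whereas the paper argues directly with logical kernels; this changes nothing of substance.
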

\begin{proof}
Let $H_1$ and $H_2$ be isotyped $\Theta$-algebras. Let $A=\langle a_1, \dots ,a_n\rangle$ where $a_1, \dots ,a_n$ are different elements in $H_1$.  Consider  the free $\Theta$-algebra $W(X)$, where $X=\{x_1, \dots ,x_n \}$. Let  $\nu \in \hom (W(X),H_1)$ defined by $\nu (x_i)=a_i$ for $1\leqq i \leqq n$. Since  $H_1$ and $H_2$ are isotyped there exists a point $\mu \in \hom (W(X),H_2)$ such that $LKer\nu = LKer \mu$. 
 We obtain a subalgebra $B=\langle \mu (a_1) ,\dots ,\mu (a_n)\rangle$ of $H_2$ and $B=\mu (W(X))$. Since $Ker\nu =Ker \mu$, algebras $A$ and $B$ are isomorphic. 

Let now $A$ be a proper subalgebra of $H_1$ and let $a_{n+1} \in H_1 \setminus A $. Add  to $X$ a new variable $x_{n+1}\not \in X$ and consider a new point $\nu :W(X\cup \{x_{n+1}\})\to H_1$ setting $\nu (x_i) =a_i$ for all $1\leqq i \leqq n+1$.  
For every $w\in W(X)$ consider the following formula $v_w \in \Phi(X\cup  \{x_{n+1}\})$:
$$v_w=\neg (x_{n+1}\equiv w) .$$
Under condition that $H_1$ and $H_2$ are isotyped, there exists a point $\mu \in \hom (W(X\cup \{x_{n+1}\}),H_2)$  such that
$LKer\nu = LKer \mu$. Since $LKer\nu \cap M_X = LKer \mu \cap M_X$, the subalgebra $B$  generated by $\mu (x_1), \dots , \mu (x_n)$ is isomorphic to $A$. On the other hand,  it is obvious that $v_w\in LKer \nu$  and hence $v_w\in LKer \mu$  for every $w\in W(X)$. The last one means that $\mu (x_{n+1})$ does not belong to $B$, that is, $B$ is a proper subalgebra of $H_2$.

\end{proof}
\begin{cor} Let a finitely generated algebra $H$  contain no proper  subalgebra isomorphic to $H$. Then  every algebra $G$ isotyped  to  $H$  is isomorphic to $H$.
\end{cor}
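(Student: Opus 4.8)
The plan is to derive the corollary directly from Theorem \ref{NecessaryIso}, using the fact that the relation of being isotyped is symmetric. Write $H=\langle a_1,\dots ,a_n\rangle$; this exhibits $H$ as a finitely generated subalgebra of itself. Let $G$ be any algebra isotyped to $H$ (so, in particular, $G$ lies in the same variety $\Theta$).

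First I would apply Theorem \ref{NecessaryIso} to the pair $(H_1,H_2)=(H,G)$ and to the finitely generated subalgebra $A=H$ of $H_1$. This produces a subalgebra $B$ of $G$ with $B\cong H$. It then remains only to show $B=G$.

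Suppose, for contradiction, that $B$ is a proper subalgebra of $G$. Since $B\cong H$ and $H$ is finitely generated, $B$ is a finitely generated (and proper) subalgebra of $G$. Now I would apply Theorem \ref{NecessaryIso} in the opposite direction, to the pair $(H_1,H_2)=(G,H)$ and to the finitely generated proper subalgebra $A=B$ of $G$: because $A$ is proper, the theorem yields a \emph{proper} subalgebra $C$ of $H$ with $C\cong B$. But $B\cong H$, so $C$ is a proper subalgebra of $H$ isomorphic to $H$, contradicting the hypothesis on $H$. Hence $B=G$, and therefore $G\cong H$.

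There is essentially no hard step here; the only point that needs a little care is invoking Theorem \ref{NecessaryIso} in both directions — once to produce a copy of $H$ inside $G$, and once to transport a hypothetical proper copy back into $H$ — which is legitimate precisely because "isotyped" is a symmetric relation. It is also worth noting that $B$ is automatically finitely generated (being isomorphic to the finitely generated $H$), so the finite-generation hypothesis of Theorem \ref{NecessaryIso} is satisfied without any assumption that $G$ itself is finitely generated.
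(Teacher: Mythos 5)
Your argument is correct and is essentially the same as the paper's: both apply Theorem \ref{NecessaryIso} to the pair $(H,G)$ to obtain $B\le G$ with $B\cong H$, and then, assuming $B$ proper, apply it in the reverse direction to $(G,H)$ with the finitely generated proper subalgebra $B$ to produce a proper subalgebra of $H$ isomorphic to $H$, a contradiction. Your explicit remarks on the symmetry of the isotyped relation and on $B$ being finitely generated just make the paper's brief proof more precise.
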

\begin{proof}
Let $H$ and $G$ be isotyped algebras. Since $H$ is finitely generated, there exists a subalgebra $B$ of $G$ isomorphic to $H$. If $B$ is a proper subalgebra of $G$ then $H$ contains a proper subalgebra $A$ which is  isomorphic to $B$ and therefore $A$ is  isomorphic to $H$ but this is impossible according to the hypotheses. Thus $B=G$.
\end {proof}
We can  apply this result to finitely dimensional linear spaces  but it is not the case for finitely generated free Abelian groups. However the next result can be obtained using Theorem \ref{NecessaryIso}.
\begin{theorem}\label{IsoAbel} If two Abelian groups are isotyped and one of them is  free and finitely generated then they are isomorphic.
\end{theorem}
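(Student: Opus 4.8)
Without loss of generality let $H_1$ be the free finitely generated Abelian group, of rank $n$. Fix $X=\{x_1,\dots,x_n\}$, so that $W(X)$ is the free Abelian group of rank $n$; choose free generators $e_1,\dots,e_n$ of $H_1$ and let $\nu:W(X)\to H_1$ be the isomorphism with $\nu(x_i)=e_i$. The plan is to find a copy of $\mathbb Z^n$ inside $H_2$ and then to show that it exhausts $H_2$. Since $H_1$ and $H_2$ are isotyped there is a point $\mu:W(X)\to H_2$ with $LKer\mu=LKer\nu$. As each equality $w\equiv w'$ is a formula of $\Phi(X)$ and $w\equiv w'\in LKer\rho$ holds precisely when $\rho(w)=\rho(w')$, the logical kernel of a point determines its ordinary kernel; hence $Ker\mu=Ker\nu$, and since $\nu$ is injective so is $\mu$. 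Therefore $B:=\mu(W(X))=\langle b_1,\dots,b_n\rangle$, with $b_i:=\mu(x_i)$, is a subgroup of $H_2$ isomorphic to $\mathbb Z^n$, and it remains only to prove $B=H_2$.

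Suppose not and choose $h\in H_2\setminus B$. Put $X'=X\cup\{x_{n+1}\}$ and let $\mu':W(X')\to H_2$ be the extension of $\mu$ with $\mu'(x_{n+1})=h$. By isotypedness there is a point $\nu':W(X')\to H_1$ with $LKer\nu'=LKer\mu'$. Write $\nu'|_X$ for the composite of $\nu'$ with the canonical embedding $W(X)\hookrightarrow W(X')$ (which sends $x_i$ to $x_i$). Using Definition \ref{Val}(5) for that embedding one checks $LKer(\nu'|_X)=LKer(\mu'|_X)=LKer\mu=LKer\nu$. Now $H_1$ is logically perfect by Theorem \ref{three log.perf}, so, applying the definition of logical perfectness to the points $\nu$ and $\nu'|_X$ of $H_1$, there is an automorphism $\varphi$ of $H_1$ with $\nu=\varphi\circ(\nu'|_X)$. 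Since post-composition with an automorphism of $H_1$ preserves every value set $Val^{Y}_{H_1}(\cdot)$, hence every logical kernel, we may replace $\nu'$ by $\varphi\circ\nu'$ without disturbing $LKer\nu'=LKer\mu'$, and after this replacement $\nu'(x_i)=e_i$ for $1\le i\le n$.

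Now $g:=\nu'(x_{n+1})$ lies in $H_1=\langle e_1,\dots,e_n\rangle$, so $g=c_1e_1+\dots+c_ne_n$ for suitable integers $c_i$; thus $\nu'(x_{n+1})=\nu'(c_1x_1+\dots+c_nx_n)$, and therefore the equality $x_{n+1}\equiv c_1x_1+\dots+c_nx_n$, a formula of $\Phi(X')$, belongs to $LKer\nu'=LKer\mu'$. Applying $\mu'$ gives $h=\mu'(x_{n+1})=c_1b_1+\dots+c_nb_n\in B$, contradicting the choice of $h$. Hence $H_2=B\cong\mathbb Z^n\cong H_1$.

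Three auxiliary facts, each proved by a routine induction on the length and sort of formulas, underlie the argument: that $LKer\rho$ determines $Ker\rho$; that an automorphism of $H_1$ acting by post-composition preserves all value sets $Val^{Y}_{H_1}$ and hence all logical kernels; and the compatibility of $LKer$ with restriction along $W(X)\hookrightarrow W(X')$. I expect none of these to be troublesome. The genuinely essential ingredients are the logical perfectness of $\mathbb Z^n$ (Theorem \ref{three log.perf}) and the use of isotypedness at arity $n+1$ rather than merely for sentences: logical perfectness lets us normalize the witness $\nu'$ in $H_1$ so that its first $n$ coordinates form a genuine free basis, and this is exactly what forces the extra element $h$ of $H_2$ into $B$. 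The main care required in a complete write-up is the bookkeeping with sorts and the connecting homomorphisms $s^{XY}$ of the multi-sorted language.
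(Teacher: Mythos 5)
Your argument is correct, and it is genuinely different from the paper's proof. The paper never invokes logical perfectness here: it first uses Theorem \ref{NecessaryIso} to see that every finitely generated subgroup of the second group $G$ is free of rank $\le n$ and that $G$ contains a copy of $H$, so $G$ has rank exactly $n$; it then writes down explicit formulas $u_{(q_1,\dots ,q_n)}$ (linear independence) and $v_{(q_1,\dots ,q_n,q)}$ (any $y$ with $q_1x_1+\dots +q_nx_n+qy\equiv 0$ is one of the finitely many combinations $k_1x_1+\dots +k_nx_n$ with $\vert k_i\vert \le \vert q_i/q\vert$), which hold at a basis of $H$, transfers them by isotypedness (via Theorem \ref{twoTypes}) to a tuple $\bar g$ in $G$, and concludes that $\bar g$ generates $G$. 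You instead work at arity $n+1$: you embed $\mathbb Z^n$ into $H_2$ via a point with the same logical kernel as the basis point $\nu$, and then, for a putative $h\notin B$, pull the extended point back to $H_1$ and use the logical perfectness of finitely generated free Abelian groups (Theorem \ref{three log.perf}, via Theorem \ref{W_Hom} and Lemma \ref{abel}) to normalize the witness so that its first $n$ coordinates are the free basis, whence the equality $x_{n+1}\equiv c_1x_1+\dots +c_nx_n$ lies in the common logical kernel and forces $h\in B$. There is no circularity, since Theorem \ref{three log.perf} precedes Theorem \ref{IsoAbel}, and your three auxiliary facts (logical kernels determine kernels, invariance of $Val$ under post-composition with automorphisms, compatibility of $LKer$ with restriction along $W(X)\hookrightarrow W(X')$) are indeed routine inductions on formulas. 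The trade-off: the paper's route is more self-contained and elementary, relying only on explicit first-order formulas and integer linear algebra, while yours is shorter at this point but leans on the heavier machinery of weak homogeneity; conceptually yours highlights that logical perfectness of the free group is exactly what replaces the paper's hand-crafted "basis" formulas.
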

\begin{proof}
Let $H$ and $G$ be  isotyped  Abelian groups and  $H$ be free of rank $n$. Then every finitely generated subgroup of $G$ is isomorphic to a subgroup of $H$.  Therefore every finitely generated subgroup of $G$ is free of a  rank $k\leq n$. This means that  every $n+1$ elements of $G$ are linearly dependent. On the other hand, $H$ is isomorphic to a subgroup $B$ of $G$.  Let $g_1, \dots, g_n$ is a base of  $B$. These elements form a maximal linearly independent system in $G$. We obtain that rank of  $G$ is equal to $n$. 

It remains to show that $G$ is finitely generated. Let $h_1,\dots , h_n$ be a base of $H$. Consider the following countable set of formulas  $u_{(q_1,\dots ,q_n)}(x_1, \dots ,x_n)$, indexed by $n$-tuples $(q_1,\dots ,q_n)$ of integers, which not all are equal to zero and formulas $v_{(q_1,\dots ,q_n,q)}(x_1, \dots ,x_n)$, indexed by $n+1$-tuples $(q_1,\dots ,q_n,q)$ of integers , where $q\not =0$ :
$$
u_{(q_1,\dots ,q_n)}(x_1, \dots ,x_n)= q_1x_1+q_2x_2+\dots +q_n x_n \not \equiv 0,
$$
\begin{multline*}
 v_{(q_1,\dots ,q_n,q)}(x_1, \dots ,x_n)=\forall y (q_1x_1+q_2x_2+\dots +q_n x_n+q y\equiv 0  \\
\Longrightarrow \bigvee_{\vert k_i \vert \leq \vert \frac{q_i}{q}\vert , i=1,\dots n} y\equiv k_1x_1+\dots +k_n x_n).
\end{multline*}

Every such formula is satisfied in $H$ by the tuple $\bar{h}=(h_1,\dots , h_n)$. Indeed, for the formulas $u_{(q_1,\dots ,q_n)}(x_1, \dots ,x_n)$ this statement is obvious. Consider the formulas $v_{(q_1,\dots ,q_n,q)}$. 
Suppose that for an element $h\in H$ we have $q_1 h_1+q_2 h_2+\dots +q_n h_n+q h=0$ for some integers $(q_1,\dots ,q_n,q)$ and $q\not =0$. Since  $(h_1,\dots , h_n)$ is a base, $h=k_1h_1+\dots +k_n h_n$ for some integers $k_i, \; i=1,\dots ,n$. It obvious that $k_i=-\frac{q_i}{q}$. Thus all considered formulas belong to $tp^{H}(\bar{h})$. 

Since $H$ and $G$ are isotyped, all formulas $u_{(q_1,\dots ,q_n)}$ and $v_{(q_1,\dots ,q_n,q)}$  belong to $tp^{G}(\bar{g})$ for some $n$-tuple $\bar{g}=(g_1, \dots ,g_n)$ in $G$. First of all this means that elements $g_1, \dots ,g_n$ are linearly independent. Let $g$ be an arbitrary element in $G$. Since rank of $G$ is $n$, the elements $g_1, \dots ,g_n,g$ are linearly dependent, that is,  $q_1g_1+\dots +q_n g_n+q g=0$ for some integers $(q_1,\dots ,q_n,q)$, which not all are equal to zero. Taking  into account that  the first $n$ elements are linearly independent, we conclude that $q\not =0$. Since   $v_{(q_1,\dots ,q_n,q)}(g_1, \dots ,g_n)$ is valid in $G$, we obtain that 
$$ \bigvee_{\vert k_i \vert \leq \vert \frac{q_i}{q}\vert, i=1,\dots n} g= k_1g_1+\dots +k_n g_n.$$  
This means that $g=k_1g_1+\dots +k_n g_n$ for some integers $k_1,\dots ,k_n$.

Consequently $G$  it is generated by $g_1, \dots, g_n$, and therefore $G$ is  isomorphic to $H$.
\end{proof}

{\bf Conjecture.} It seems to be probable that analogous result takes place for nilpotent groups too.

{\bf Remark} B. Plotkin writes \cite{Plotkin_IsotAlg} that Z. Sela has proved a similar fact for free non-commutative groups (unpublished).

\end{document}